\documentclass[12pt, leqno]{amsart}

\usepackage[OT2,T1]{fontenc}
\DeclareSymbolFont{cyrletters}{OT2}{wncyr}{m}{n}
\DeclareMathSymbol{\Sha}{\mathalpha}{cyrletters}{"58}

\usepackage{indentfirst}
\usepackage{amstext}
\usepackage{amsopn}
\usepackage{amsfonts}
\usepackage{amsmath}
\usepackage{latexsym}
\usepackage{amscd}
\usepackage{amssymb}
\usepackage{amsmath}
\usepackage[all,cmtip]{xy}
\usepackage{leftidx}
\usepackage{graphicx}
\usepackage{tikz}
\usepackage{hyperref}

=\oddsidemargin \font\teneufm=eufm10 \font\seveneufm=eufm7
\font\fiveeufm=eufm5
\newfam\eufmfam
\textfont\eufmfam=\teneufm \scriptfont\eufmfam=\seveneufm
\scriptscriptfont\eufmfam=\fiveeufm

\def\cF{\mathcal F}

\def\cO{\mathcal O}

\def\cE{\mathcal E}

\def\GG{\mathbb{G}}

\def\1{\mbox{\bf 1}}

\def\corad{\mathrm{corad}}
\def\rad{\mathrm{rad}}

 \DeclareMathOperator{\Hom}{Hom}
\DeclareMathOperator{\Aut}{Aut}
\DeclareMathOperator{\Autext}{Autext}

\DeclareMathOperator{\Out}{Out} 
\DeclareMathOperator{\Isom}{Isom}
\DeclareMathOperator{\Isomext}{Isomext}

\DeclareMathOperator{\Isomout}{Isomout}

\DeclareMathOperator{\GL}{\rm GL}

\newcommand{\incl}[1][r]
{\ar@<-0.2pc>@{^(-}[#1] \ar@<+0.2pc>@{-}[#1]}

\newcommand{\imm}[1][r]
   {\ar@{}[#1] |*[o][F]{\hbox{%
         }}
     \ar@{^{(}->}[#1]}

\newtheorem{stheorem}{Theorem}[section]

\newtheorem{scorollary}[stheorem]{Corollary}
\newtheorem{slemma}[stheorem]{Lemma}
\newtheorem{sproposition}[stheorem]{Proposition}
\newtheorem{sremark}[stheorem]{Remark}

\theoremstyle{definition}

\numberwithin{equation}{section}

\def\ZZ{\mathbb{Z}}

\def\cO{\mathcal{O}}

\def\Lie{\mathop{\rm Lie}\nolimits}

\def\2int{\mathop{2\int}\nolimits}

\def\Spec{\mathop{\rm Spec}\nolimits}

\def\Lie{\mathop{\rm Lie}\nolimits}

\def\Hom{\mathop{\rm Hom}\nolimits}

\def\Gal{\mathop{\rm Gal}\nolimits}

\def\scd{\mathop{\rm scd}\nolimits}

\def\Aut{\text{\rm{Aut}}}
\def\Out{\text{\rm{Out}}}

\def\Isom{\mathop{\rm Isom}\nolimits}
\def\Isomext{\mathop{\rm Isomext}\nolimits}

\def\resp.{\mathop{\rm resp.}\nolimits}
\def\limproj{\mathop{\oalign{lim\cr
\hidewidth$\longleftarrow$\hidewidth\cr}}}

\def\lgr{\longrightarrow}

\font\math=cmmi10
\def\varpi{\hbox{\math\char'44}}

\def\simlgr{\buildrel\sim\over\lgr}

\def\Coker{\mathop{\rm Coker}\nolimits}

\def\pa{\S\kern.15em }

\def\un{\uppercase\expandafter{\romannumeral 1}}
\def\deux{\uppercase\expandafter{\romannumeral 2}}
\def\trois{\uppercase\expandafter{\romannumeral 3}}
\def\quatre{\uppercase\expandafter{\romannumeral 4}}
\def\cinq{\uppercase\expandafter{\romannumeral 5}}
\def\six{\uppercase\expandafter{\romannumeral 6}}

\def\sm{\smallskip}

\def\hfl#1#2#3{\smash{\mathop{\hbox to#3{\rightarrowfill}}\limits
^{\scriptstyle#1}_{\scriptstyle#2}}}
\def\gfl#1#2#3{\smash{\mathop{\hbox to#3{\leftarrowfill}}\limits
^{\scriptstyle#1}_{\scriptstyle#2}}}

\title[Flasque resolutions]{Flasque resolutions  of linear \\
reductive  group schemes }

\date{\today}

\author{Philippe Gille}

\address[]{P. Gille, Institut Camille Jordan - Universit\'e Claude Bernard Lyon 1
43 boulevard du 11 novembre 1918,
69622 Villeurbanne cedex - France }
\email{gille@math.univ-lyon1.fr}

\begin{document}

 \begin{abstract}  This paper studies flasque resolutions for linear reductive group schemes over arbitrary bases, extending Colliot-Thélène's theory.
A flasque resolution exists for any linear reductive group scheme over 
a connected base scheme (resp.\ a quasi-compact, quasi-separated base scheme).
The construction relies on Borovoi's fundamental group and descent theory involving automorphisms
of Chevalley groups.
Applications include R-equivalence for reductive group schemes over rings and over a field of cohomological dimension $\leq 2$.
\end{abstract}

\maketitle

\bigskip

\noindent {\em Keywords:} Reductive group schemes, flasque resolutions, Borovoi's algebraic fundamental group,
$R$-equivalence. \\

\vskip-4mm

\noindent {\em MSC 2020: 14L15, 20G35}

\bigskip

\bigskip

\section{Introduction} 
Let $k$ be a field and let $G$ be a reductive $k$-group. Colliot-Th\'el\`ene constructed a flasque resolution \cite{CT2008}
$$
1 \to F \to \widetilde G \to G \to 1 
$$
generalizing the case of tori.
Here $F$ is a flasque $k$-torus and 
$\widetilde G$ is quasi-trivial.
It means that the derived group $D\widetilde G$  
is semisimple simply connected and that the 
coradical torus $Q=\widetilde G /D\widetilde G$
is quasi-trivial. There is a way to compare
two flasque resolutions so that 
the above resolution is essentially unique 
({\it ibid}, Proposition 3.2).
Remark 6.4.1 of that paper
 asks  whether one can arrange
  $F$ and $Q$ to be split
by the minimal splitting field of 
the Galois module $\pi_1(G)$ which is 
Borovoi's algebraic fundamental group of $G$.
 A "generative pretrained transformer" suggested a sketch of proof
 \cite[Hypoth\`ese 5.5]{CT2026} and
  our purpose here is different.
  We present a  complete proof of the above question
based on an alternate approach.
It involves more descent techniques and is 
mostly based on structural results of pinned Chevalley groups.
The advantage of this method is that it extends mechanically to the case of an arbitrary connected scheme $S$,so that it also refines results of   Gonz\'ales-Avil\'es \cite{Go}.
More precisely, a similar statement holds
for a reductive $S$-group scheme $G$ which is linear (Theorem \ref{main_connected}). 
The  linearity assumption is actually necessary
and we recall that 
it is equivalent to requiring 
the isotriviality of the 
 quasi-split  form of $G$ \cite[Theorem 4.1]{Gi4}.

We also provide the construction of 
 flasque resolutions for  a linear reductive group scheme $G$ over a quasi-compact and quasi-separated
base scheme $S$ (Theorem \ref{thm_qcqs}). Finally we apply
flasque resolutions to $R$-equivalence over rings.

\medskip

 \noindent{\bf Acknowledgments.}
 We thank Jean-Louis Colliot-Th\'el\`ene
 for informing us of his running work
 and for his comments on the manuscript.

\bigskip

\smallskip

 \noindent{\bf Conventions and notation.}
  We shall use the treatise SGA3 quite intensively, in particular
  those on reductive group schemes, their automorphisms, Chevalley groups, etc... For example, $\rad(G)$ denotes the radical 
  torus of a reductive $S$-group scheme $G$, that is, 
  the maximal central $S$-subtorus of $G$ (Exp.\ XXII).
We changed $\Autext$ to $\Out$ and similarly
  $\Isomext$ to $\Isomout$. 
  
  All sheaf torsors in this paper are representable, since 
  the relevant group schemes are ind-quasi-affine; this is an application 
  of Gabber's theory, see \cite[\S 2.1]{Gi5} for details.

We recall that an $S$-group scheme $G$ is
linear if there exists an  $S$-group 
monomorphism $G \to  \GL(\cE)$ where $\cE$
 is a locally
free $\cO_S$-module of finite rank.

\section{Automorphisms of reductive group schemes and fundamental groups}

\subsection{Pinning} Let $G_0$ be  a Chevalley split reductive group scheme
 defined over $\ZZ$.

Let  $(B_0,T_0)$ be a Killing pair of  $G_0$.
It gives rise to a root datum ${\Psi_0=(M_0,  M_0^\vee, R_0, R_0^\vee)}$ where $M_0= \widehat T_0(\ZZ)$ is the character group
of $T_0$ and $R_0 \subset M_0$ is the set of roots. 
We denote by $\Delta_0$ the Dynkin diagram associated to $R_0$. 

Let  $\cE_0$ be a pinning of $G_0$ relative to $(B_0,T_0)$, 
the extra data being a set $(X_\alpha)_{\alpha \in \Delta_0}$
of eigenvectors for the adjoint action of  $T_0$ on $\Lie(G_0)$. 
We denote by $E_0$ the automorphism group of the 
pinned root data $\Psi_0$, it is also 
the subgroup of $\Aut(G_0,B_0,T_0)(\ZZ)$ 
which fixes  $\cE_0$.
According to \cite[\S XXIV.1]{SGA3}, the functor of automorphisms of $G_0$ is representable by the  $\ZZ$-scheme in groups
$$
\Aut(G_0)= G_0^{ad} \rtimes (E_0)_\ZZ,
$$
where $(E_0)_\ZZ$ denotes the constant  $\ZZ$-group scheme associated to the abstract group $E_0$ and $G_0^{ad} =G_0/C(G_0)$ is the adjoint group scheme of $G_0$.

The derived $\ZZ$-group scheme $DG_0$ is semisimple and admits
a simply connected cover $G_0^{sc} \to G_0$.
The Killing pair $(B_0,T_0)$ defines a Killing pair  $(B_0^{sc},T_0^{sc})$
for $G_0^{sc}$. Also we have  natural maps 
$\Aut(G_0) \to \Aut(DG_0) \to \Aut(G_0^{sc})$.

\subsection{Fundamental groups}
For a reductive $S$-group scheme $H$ over a non-empty base scheme $S$,   Borovoi and Gonz\'alez-Avil\'es 
defined the fundamental $S$-group $\pi_1(H)$ which
is a finitely generated commutative twisted constant $S$-group scheme
\cite{BG}. This construction
extends the case of a reductive algebraic group
and  is functorial in $H$ ({\it ibid}, Remark 4.2).
In particular the $S$-group scheme $\Aut(H)$ acts on $\pi_1(H)$. We recall that 
we have an exact sequence \cite[\S XXIV.1]{SGA3}
$$
1 \to H^{ad} \to H \to \Out(H) \to 1
$$
where $H^{ad}=H/C(H)$ is the adjoint $S$-group 
scheme  of $H$ and $\Out(H)$ a twisted constant
$S$-group scheme.   
As in Topology, we have the following.

\begin{slemma} \label{lem_outer} The adjoint group $H^{ad}$  acts trivially on  $\pi_1(H)$.
\end{slemma}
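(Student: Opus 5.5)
Interpret the statement as saying that the action of $\Aut(H)$ on $\pi_1(H)$, restricted along the inner map $H^{ad} \to \Aut(H)$, is trivial. Equivalently, the action factors through $\Out(H)$. The plan is to check this locally and then reduce to a constant-group statement.

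\textbf{Reduction to the split case.} Both $H^{ad}$ and $\pi_1(H)$ are compatible with base change, and triviality of a morphism of $S$-schemes $H^{ad}\times_S \pi_1(H) \to \pi_1(H)$ can be checked étale locally. We may therefore assume that $S$ is affine, that $H$ is the base change $G_{0,S}$ of a Chevalley group, and that $\pi_1(H)$ is the constant group. In the split case, the Borovoi--González-Avilés construction gives
$$\pi_1(H)=M_0^\vee/\langle R_0^\vee\rangle=\widehat{T}^{\,\vee}_{0}/\mathrm{coroot\ lattice},$$
more intrinsically $\pi_1(H)=\mathrm{coker}\big(X_*(T^{sc})\to X_*(T)\big)$ for a maximal torus $T$. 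This description is functorial in automorphisms.

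\textbf{Main step: triviality on sections.} Now $\pi_1(H)$ is a constant (étale) group and $H^{ad}$ has connected fibres. The action map $H^{ad}\times \pi_1(H)\to\pi_1(H)$ sends the connected fibre through the unit section into the unit component. Hence for each section $x$ of $\pi_1(H)$, the map $H^{ad}\to \pi_1(H)$, $g\mapsto g\cdot x$, is constant on the fibres of $H^{ad}$, and its value at the unit is $x$. Concretely, a morphism from a scheme with connected (geometrically connected) fibres to a constant scheme $\Gamma_S$ that is locally constant on $S$ agrees with the unit-section value. The rigorous form works fibrewise over each $s\in S$: $H^{ad}_s$ is connected, so the composite lands in a single point of $\Gamma$, namely $x(s)$. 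Since $\Gamma_S\to S$ is étale separated, two morphisms $H^{ad}\to\Gamma_S$ that agree on all fibres coincide. This is the topological analogue, in which a connected group acts trivially on a discrete invariant.

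\textbf{Alternative, more concrete check.} For robustness I would also verify the result directly on points. Over a strictly henselian local base, $H^{ad}(S')$-points lift locally to $H$ by the fppf surjection $H\to H^{ad}$, up to the centre. Inner automorphisms $\mathrm{int}(h)$ with $h\in T(S')$ act trivially on $T$, hence on the cocharacter cokernel. Inner automorphisms by elements of the normalizer act on $X_*(T)$ via the Weyl group. A reflection $s_\alpha$ acts on a cocharacter $\lambda$ by $s_\alpha(\lambda)=\lambda-\langle\alpha,\lambda\rangle\alpha^\vee$, and $\alpha^\vee$ lies in the coroot lattice, so the action on the quotient is trivial. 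Since locally $H$ is generated by $T$, $N(T)$ and root subgroups (which are unipotent and connected), this also gives the result. I expect the main obstacle to be matching the functoriality of \cite{BG} with this explicit description. The connectedness argument avoids that issue, so it is my primary route.
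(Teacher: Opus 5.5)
Your primary connectedness argument is correct and is essentially the paper's proof. The paper looks at the action homomorphism $a\colon H \to \Aut(\pi_1(H))$, whose target is twisted constant, uses connectedness of the geometric fibres of $H$ to get triviality fibrewise, and then concludes globally via the kernel of $a$; you run the same argument on the orbit maps $H^{ad}\to \pi_1(H)$, after a reduction to the split case that is harmless but not needed.
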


\begin{proof} The idea is as in Topology:
the continuous action of a connected Lie group 
on a discrete object must be trivial.
Since $\pi_1(H)$ 
is a finitely generated commutative twisted constant $S$-group scheme, the 
$S$-functor $\Aut( \pi_1(H))$
is representable by a twisted constant $S$-group scheme.

We deal with the action homomorphism
$a: H \to \Aut( \pi_1(H))$ of $S$-group schemes.
Let $s \in S$ be a point and let 
$\kappa(s)$ be its residue field.
Since the geometric fiber $H_{\overline{\kappa(s)}}$ is connected and $\Aut( \pi_1(H))_{\overline{ \kappa(s)}}$ is a constant algebraic group,
the homomorphism $a_{\overline{\kappa(s)}}$
is trivial and so is $a_{\kappa(s)}$.

Let $H'$ be the kernel of $a$.
Since $\Aut( \pi_1(H))$ is separated, $H'$ is a closed $S$-subscheme of $H$.
We use now that $H'_{\kappa(s)}=H_{\kappa(s)}$
for each $s \in S$, so that the map
$H' \to H$ is a surjective closed immersion.
Thus $H'=H$ and $a$ is trivial.
\end{proof}

 Lemma \ref{lem_outer} defines an action of $\Out(H)$ on $\pi_1(H)$. 
In the case of $G_0$, $\pi_1(G_0)$ is actually a constant f.g.\ commutative
$S$-group and we recall to the reader its value.
 We denote by $G_0^{sc}$ the universal cover of 
the derived $\ZZ$-group scheme $G'_0=DG_0$. The Killing pair 
$(B_0,T_0)$ defines a
Killing pair $(B'_0,T'_0)$  for $G'_0$ \cite[Exp.\ XXII, Proposition 6.2.8]{SGA3};
similary the  Killing pair 
$(B'_0,T'_0)$  of  $G'_0$ defines a Killing pair $(B_0^{sc},T_0^{sc})$ of
$G_0^{sc}$  ({\it loc. cit.}, Remarque 5.8.4).
We consider the lattice  $M_0^{sc}= \widehat T_0^{sc}(\ZZ)$ and 
denote its dual by $M_0^{sc, \vee}$. 
 
We consider  the $\ZZ$-module $$\pi_1(G_0)= \Coker\bigl(  M_0^{sc, \vee} \to M_0^\vee \bigr)
$$
and $\pi_1(G_0)_{S}$ is the fundamental group of $G_0$ in the sense of Borovoi and Gonz\'alez-Avil\'es.
Note that  $M_0^{sc, \vee} \to M_0^\vee$ is injective  and
that it comes equipped with an action of $E_0$.
More precisely we have a homomorphism $E_0 \to \Aut(\pi_1(G_0))$
and we denote by  $E_{0, \sharp}$ its image in $\Aut(\pi_1(G_0))$.

We have the inclusion $\ker\bigl( E_0 \to E_{0, \sharp} \bigr) 
\subseteq \ker\bigl( E_0 \to \Aut(\rad(G_0)) \bigr)$.
Since the second kernel is finite \cite[Exp.\ XXI, Proposition 2.15]{SGA3}
so is the first one.

\subsection{Finite subgroups of $E_0$}
Let $\Gamma_\sharp$ be a finite subgroup of $E_{0, \sharp}$
and we denote by $\Gamma \subset E_0$ its preimage in $E_0$
which is finite as well. 
We denote by $$
\Aut_\Gamma(G_0)= G_0^{ad} \rtimes \Gamma_\ZZ.
$$
the closed $\ZZ$-subgroup scheme of $\Aut(G_0)$.
It is smooth affine over $\ZZ$.

Let $G$ be an $S$-form of $G_{0,S}$. It gives rise
to the $\Aut(G_0)$-torsor $P= \Isom(G_{0,S}, G)$
and to  the $E_0$-torsor $E=\Isomout(G_{0,S}, G)= P \wedge^{\Aut(G_0)} E_0$.
and  to  the $E_{0,\sharp}$-torsor $E_\sharp=\Isomout_\sharp(G_{0,S}, G)= E \wedge^{E_0} E_{0, \sharp}$.
Also the fundamental group 
$\pi_1(G)$ identifies then with the twist $^{E_\sharp}\!\pi_1(G_0)$.
Next the morphism $P \to E$ induces  an isomorphism of $S$-schemes $$
P/\Aut_\Gamma(G_0) \simlgr E/\Gamma
\simlgr E_\sharp/\Gamma_\sharp.
$$

\begin{slemma}\label{lem_reduction}
 The following are equivalent:
 
 \sm
 
 (i)  The $\Aut(G_0)$-torsor $P=\mathrm{Isom}(G_0,G)$ admits
a reduction to $\Aut_\Gamma(G_0)$;

\sm

(ii) The $E_0$-torsor $E=\mathrm{Isomout}(G_0,G)$ admits
a reduction to $\Gamma$;

\sm

(iii) The $E_{0,\sharp}$-torsor $E_\sharp=\mathrm{Isomout}_\sharp(G_0,G)$ admits
a reduction to $\Gamma_\sharp$.

 \sm
 
\noindent Furthermore, in that case, $G$ is linear.

\end{slemma}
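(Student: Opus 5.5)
The equivalences rest on the standard dictionary between reductions of a torsor and sections of the associated quotient. For an fppf (or \'etale) $H$-torsor $Q$ over $S$ and a closed subgroup $H' \subset H$ such that $H/H'$ is representable, reductions of $Q$ to $H'$ correspond bijectively to sections of $Q/H' = Q \wedge^H (H/H') \to S$. I would apply this three times and use the isomorphism of $S$-schemes
$$
P/\Aut_\Gamma(G_0) \simlgr E/\Gamma \simlgr E_\sharp/\Gamma_\sharp
$$
recorded just before the statement. The quotients are harmless here. We have $\Aut(G_0)/\Aut_\Gamma(G_0) \cong (E_0/\Gamma)_\ZZ$, which is a constant scheme, and similarly $E_0/\Gamma \cong E_{0,\sharp}/\Gamma_\sharp$. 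This last identification holds because $\Gamma$ is the full preimage of $\Gamma_\sharp$, so it contains $\ker(E_0\to E_{0,\sharp})$. The twisted forms of these constant schemes are representable, being étale over $S$. A section of one of the three isomorphic quotient schemes is therefore the same as a section of each of the others, which gives (i)$\Leftrightarrow$(ii)$\Leftrightarrow$(iii).

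The isomorphisms themselves should be checked explicitly. The map $P \to E$ is $\Aut(G_0)$-equivariant along $\Aut(G_0)\to E_0$, and $\Aut_\Gamma(G_0)$ is exactly the preimage of $\Gamma$. So $P/\Aut_\Gamma(G_0) = P\wedge^{\Aut(G_0)}(\Aut(G_0)/\Aut_\Gamma(G_0)) = P\wedge^{\Aut(G_0)} (E_0/\Gamma) = E\wedge^{E_0}(E_0/\Gamma) = E/\Gamma$. The second isomorphism follows in the same way from the equality $E_0/\Gamma = E_{0,\sharp}/\Gamma_\sharp$ of $E_0$-sets.

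For linearity, suppose $P$ has a reduction $P_\Gamma$ to $\Aut_\Gamma(G_0)=G_0^{ad}\rtimes\Gamma_\ZZ$. Then $G \cong P_\Gamma \wedge^{\Aut_\Gamma(G_0)} G_{0,S}$. Since $\Aut_\Gamma(G_0)$ is affine and flat of finite type over $\ZZ$, I would find a faithful finite free $\ZZ$-representation $V$ of the semidirect product $G_0\rtimes \Gamma_\ZZ$, with $\Gamma$ acting on $G_0$ through its pinned action; this is where finiteness of $\Gamma$ is used. Concretely, take a faithful representation $W$ of $G_0$ over $\ZZ$, which exists since $G_0$ is a split reductive, hence linear, Chevalley group. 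Then set $V=\bigoplus_{\gamma\in\Gamma}W^{\gamma}$, the induced representation; it is faithful on $G_0\rtimes\Gamma$. Next I would make $\Aut_\Gamma(G_0)$ act compatibly. The adjoint group does not act on $V$ itself, so instead I would embed $G_0 \hookrightarrow \GL(V)$ in a way that is equivariant for a suitable group mapping onto $\Aut_\Gamma(G_0)$. This step is the main obstacle. The simplest approach is to descend along the finite étale $\Gamma$-torsor $E_\Gamma$ underlying $P_\Gamma$. Over the finite étale cover $S'=E_\Gamma\to S$, the group $G$ becomes an inner form of $G_{0,S'}$. It is then enough to show that $G$ is linear when $G$ is the Weil restriction-compatible descent, via the norm/induction $R_{S'/S}$. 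Concretely, $G\hookrightarrow R_{S'/S}(G_{S'})$, and a Weil restriction of a linear group along a finite locally free morphism is linear. This reduces the question to linearity of inner forms of $G_{0,S'}$. For an inner form, i.e.\ a twist by a $G_0^{ad}$-torsor, I would use the faithful adjoint-type representation $\Lie(G_0)\oplus$ (a character faithful on the radical), twisted by the torsor. Faithfulness of such a representation is checked fiberwise only up to centre, so I would instead invoke the known result quoted in the introduction: linearity is equivalent to isotriviality of the quasi-split form. Here the quasi-split form of $G$ is $E_\Gamma\wedge^{\Gamma}G_{0,S}$, which is split by the finite étale cover $S'$, hence isotrivial, so $G$ is linear by \cite[Theorem 4.1]{Gi4}. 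This shortcut is the route I would actually take.
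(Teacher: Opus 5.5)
Your proof is correct and follows the paper's route. The equivalences come from identifying reductions with sections of the quotient (Giraud), combined with the isomorphisms $P/\Aut_\Gamma(G_0)\simeq E/\Gamma\simeq E_\sharp/\Gamma_\sharp$. Linearity follows from Gille's linearity criterion, since the quasi-split form $E_\Gamma\wedge^\Gamma G_{0,S}$ is split by the finite \'etale cover $E_\Gamma\to S$. You can drop the exploratory detours through faithful representations and Weil restriction: they are unnecessary, and the adjoint-plus-character representation would not be faithful in general.
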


\begin{proof}
 In view of \cite[III, Proposition 3.2.1]{Gd} 
 (i) is equivalent to  $(P/ \Aut_{\Gamma})(S) \not = \emptyset$
 and similarly (ii) is equivalent to 
 $(E/\Gamma)(S) \not = \emptyset$. Since $P/\Aut_\Gamma(G_0) \simlgr E/\Gamma$, 
 the statement follows.
 
The equivalence  $(ii) \Longleftrightarrow (iii)$
uses the same argument. 
 Finally, if (ii) holds, $G$ is linear according to \cite[Theorem 4.1]{Gi5}.  
 
\end{proof}

\section{Flasque resolutions}

\subsection{Definitions}
For tori, the definition goes back to \cite[\S 1]{CTS2}.
An $S$-torus $T$ is flasque (resp.\ coflasque,  permutation, invertible)
if  for any connected component $Z$ of $S$  there exists 
a connected \'etale cover $Z' \to  Z$, 
 principal Galois with group $H$, which splits $T$, and such that 
 $\widehat   T(Z’)$ is a flasque
(resp.\ coflasque,  permutation, invertible) $H$-module.
The definition is stable under arbitrary base changes ({\it ibid}, Proposition 1.4).

A reductive $S$-group scheme $G$ is called quasi-trivial (resp.\ coflasque) if  $DG$ is a semisimple simply-connected $S$-group scheme and the coradical torus $\corad(G)$
is a quasi-trivial (resp.\ coflasque) $S$-torus.

A flasque resolution of a  reductive $S$-group scheme $G$
is an exact sequence $1 \to F \to \widetilde G \to G \to 1$
where $F$ is a flasque $S$-torus and $\widetilde G$ a quasi-trivial $S$-group scheme.

\subsection{The equivariant Chevalley case}

Let ${\Gamma_\sharp \subset E_{0, \sharp} \subset \Aut(\pi_1(G_0))}$ 
 be a finite subgroup and let  $\Gamma \subset E_0$ be its preimage. 
The idea is to consider a resolution of $\Gamma_\sharp$-modules
(and then of $\Gamma$-modules)
$$
0 \to   N^\vee \to  M^\vee  \to  \pi_1(G_0) \to 0
$$ 
where $M$ and $N$ are lattices\footnote{It is a left action hence
the action of $\Gamma_\sharp$ on $N$ is on the right.}. We pull-back that resolution by  
the map $ M_0^\vee \to \pi_1(G_0)$ and obtain the following commutative exact diagram
of $\Gamma$-modules 
\begin{equation}\label{diag1}
\xymatrix{
&&0 \ar[d] &  0 \ar[d] \\
&& M_0^{sc,\vee}\ar@{=}[r] \ar[d]  &  M_0^{sc,\vee} \ar[d] \\
0 \ar[r] & N^\vee  \ar@{=}[d]\ar[r] & M_1^\vee  \ar[r] \ar[d] &  M_0^{\vee} \ar[d]   \ar[r] & 0 \\
    0 \ar[r] & N^\vee  \ar[r] & M^\vee  \ar[r] \ar[d] &\Pi_1(G_0)  \ar[d]  \ar[r] & 0 \\
    &&0& 0 
  }
\end{equation}
where $M_1$ is a $\Gamma$-lattice. We put $T_1=D_\ZZ(M_1)$ and
we have an exact sequence 
$$
1 \to D_\ZZ(N) \to T_1 \xrightarrow{p} T_0 \to 1
.
$$  
We define the root datum $\Psi_1=(M_1, M_1^\vee, R_1, R_1^\vee)$ as follows:

\medskip

(i) $R_1$ is the image of $R_0$ by the map $p^\vee : M_0 \to M_1$;

\sm

(ii) $R_1^\vee$  is the image of $R_0^\vee$ by the map $ M_0^{sc,\vee} \to M_1$.

\medskip

\noindent It is immediate to check that $\Psi_1$ is a root datum and also
that $p^\vee: M_0 \to M_1$ induces a morphism  $\Psi_0 \to \Psi_1$ 
of root data. Note that the order on $\Psi_0$ defines an order on $\Psi_1$ and in particular a base $\Delta_1$ of $\Psi_1$.
We denote by $E_1$ the automorphism group of the 
pinned root data $\Psi_1$.

\begin{slemma}\label{lem_action}
 (1)  The action of $\Gamma$ on $M_1$ induces
an action of $\Gamma$ on the 
pinned root data $\Psi_1$.

\sm

\noindent (2)  For the above action, the morphism  $\Psi_0 \to \Psi_1$ 
of pinned root data is $\Gamma$-equivariant.

\end{slemma}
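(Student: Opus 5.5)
The plan is to check directly that the $\Gamma$-action on $M_1$, and the dual action on $M_1^\vee$, preserves each ingredient of the pinned root datum $\Psi_1$. We first fix how $\Gamma$ acts on everything in diagram (\ref{diag1}). On $M_0$, $M_0^\vee$ and $M_0^{sc,\vee}$ it acts through $E_0$, which acts on the pinned root datum $\Psi_0$. On $N^\vee$, $M^\vee$ and $\pi_1(G_0)$ it acts through $\Gamma \to \Gamma_\sharp$. The pullback $M_1^\vee = M^\vee \times_{\pi_1(G_0)} M_0^\vee$ then carries the diagonal action. All maps in (\ref{diag1}) are $\Gamma$-equivariant, including the maps $M_0^{sc,\vee}\to M_1^\vee$ and $M_1^\vee \to M_0^\vee$. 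Since $M_0^{sc,\vee}\to M_0^\vee$ is $E_0$-equivariant and the composite $M_0^{sc,\vee}\to M_0^\vee \to \pi_1(G_0)$ is zero, the map $M_0^{sc,\vee}\to M_1^\vee$ is $v\mapsto (0,v)$. This is the map used to define $R_1^\vee$ (the ``$M_1$'' in (ii) should read $M_1^\vee$). Dually, $p^\vee : M_0 \to M_1$ is $\Gamma$-equivariant because it is the transpose of the equivariant surjection $M_1^\vee \to M_0^\vee$.

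For (1), we check that $\Gamma$ preserves $R_1$, $R_1^\vee$, the bijection $\alpha\mapsto\alpha^\vee$ and the base $\Delta_1$. The set $R_1$ is $p^\vee(R_0)$ with $p^\vee$ equivariant, and $R_0$ is $E_0$-stable, so $\gamma(R_1)=p^\vee(\gamma R_0)=R_1$. Likewise $R_1^\vee$ is the image of $R_0^\vee\subset M_0^{sc,\vee}$ under an equivariant map, and $R_0^\vee$ is $\Gamma$-stable because $\Gamma\subset E_0$ acts compatibly on $G_0^{sc}$. Next, the bijection $R_1\to R_1^\vee$ is $p^\vee(\alpha)\mapsto$ the image of $\alpha^\vee$. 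This is well defined because $p^\vee$ is injective, being dual to a surjection of lattices. It commutes with $\gamma$ because $\alpha\mapsto\alpha^\vee$ does on $\Psi_0$. Since $\gamma$ acts on $M_1$ and $M_1^\vee$ by mutually transpose automorphisms, it preserves the pairing, and hence $\gamma$ is an automorphism of the root datum $\Psi_1$. Finally, $\Delta_1=p^\vee(\Delta_0)$, and $E_0$ preserves $\Delta_0$, so $\Gamma$ preserves $\Delta_1$. This gives a homomorphism $\Gamma\to E_1$, which is the action in (1).

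Statement (2) follows from the construction. The morphism $\Psi_0\to\Psi_1$ consists of $p^\vee$ together with the maps on coroots through $M_0^{sc,\vee}\to M_1^\vee$, and both are $\Gamma$-equivariant. Moreover it sends $\Delta_0$ onto $\Delta_1$.

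The main obstacle is the bookkeeping of left versus right actions, as the footnote warns. We must make sure that the action on $M_1$ is the contragredient of a genuine left action on $M_1^\vee$, so that $\Gamma\to E_1$ is a homomorphism and not an anti-homomorphism. We must also check that the equivariance of $M_0^{sc,\vee}\to M_0^\vee$ under $E_0$ matches the convention used for $\pi_1(G_0)$. We would handle this by working throughout with left actions on the cocharacter side $M_1^\vee$ and defining the action on $M_1$ by $\gamma\cdot m = m\circ\gamma^{-1}$.
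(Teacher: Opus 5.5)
Your proposal is correct and follows the same route as the paper. The paper simply asserts that $\Gamma$ preserves $R_1$, $R_1^\vee$ and $\Delta_1$, and deduces (2) from the $\Gamma$-equivariance of $p^\vee\colon M_0 \to M_1$; you supply the details it omits (equivariance of the pullback $M_1^\vee$, injectivity of $p^\vee$ making $R_1 \to R_1^\vee$ a well-defined $\Gamma$-compatible bijection, preservation of the pairing), and you rightly note that the coroot map in (ii) should land in $M_1^\vee$.
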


\begin{proof} (1) Clearly $\Gamma$ acts on $R_1$ (resp.\ $R_1^\vee)$
and preserves $\Delta_1$.

\sm

\noindent (2)  It follows from the fact that $p^\vee: M_0 \to M_1$ 
is $\Gamma$-equivariant.
\end{proof}

Let $G_1$ be a split reductive $\ZZ$-group scheme with maximal split $\ZZ$-torus
$T_1$ and root datum $\Psi_1$ \cite[Exp.\ XXV, Corollaire 1.2]{SGA3};
we choose a pinning   $(Y_\alpha)_{\alpha \in \Delta_1}$
of eigenvectors for the adjoint action of  $T_1$ on $\Lie(G_1)$.
Altogether, we have a pinning $\cE_1$ for $G_1$.

In view of Chevalley's dictionnary ({\it ibid}, Th\'eor\`eme 1.1), the morphism
$\Psi_0 \to \Psi_1$  of pinned root data
defines a unique  homomorphism $p: G_1 \to G_0$ 
of reductive $\ZZ$-group schemes with a compatibility between $\cE_0$ and
$\cE_1$ (see \cite[D\'efinition 3.5.5]{D65} for details).

\begin{sremark}{\rm We note that there is a choice for $G_1$ and its pinning. Given another choice of   pinned  $\ZZ$-group scheme $G'_1$
providing $p': G'_1 \to G_0$, then there is a unique isomorphism
$u: G_1 \simlgr G'_1$ of pinned split reductive $\ZZ$-group schemes
such that $p=p' \circ u$. In other words, the choice of $G_1$ and its pinning is unique up to unique isomorphism.  
}
\end{sremark}

Lemma \ref{lem_action}.(1) provides an embedding $\Gamma \subseteq E_1$
so that we can consider  
 $\Aut_\Gamma(G_1)= G_1^{ad} \rtimes \Gamma= G_0^{ad} \rtimes \Gamma
 =\Aut_\Gamma(G_0)$ inside $\Aut(G_1)$.
Again Chevalley's equivalence of categories yields that the 
homomorphism $p:G_1 \to G_0$ is $\Aut_\Gamma(G_1)=\Aut_\Gamma(G_0)$-equivariant.
 Summarizing we obtained the following construction.

\begin{sproposition}\label{prop_michel}
 We have the
commutative exact diagram of reductive $\ZZ$-group schemes 

\[\xymatrix{
&&1 \ar[d] &   \\
&& G_0^{sc} \ar[d]  &  \\
1 \ar[r] & D_\ZZ(N)\ar[r]  & G_1  \ar[r]^p \ar[d] &  G_0   \ar[r] & 1 \\
     &  & D_\ZZ(M)   \ar[d] & \\
    &&0&  
  }
\]
which is $\Aut_\Gamma(G_1)=\Aut_\Gamma(G_0)=G_0^{ad} \rtimes \Gamma$-equivariant. Furthermore
$\Aut_\Gamma(G_0)$ acts on  $D_\ZZ(M)$ (resp.\ $D_\ZZ(N)$) 
through  the action of $\Gamma_\sharp$ 
 on $M$ (resp.\ $N$).

\end{sproposition}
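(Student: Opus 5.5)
We read every claim in the diagram off the commutative diagram (\ref{diag1}) of $\Gamma$-lattices, using Chevalley's dictionary (the equivalence between pinned split reductive $\ZZ$-groups and pinned root data, \cite[Exp.\ XXV]{SGA3}).

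\emph{Kernel of $p$.} The homomorphism $p: G_1 \to G_0$ comes from the morphism $\Psi_0 \to \Psi_1$, which is the identity on roots. Hence $p$ restricts to an isomorphism between the root subgroups $U_\alpha$ of $G_1$ and of $G_0$. On the maximal tori, $p$ restricts to $T_1 \to T_0$, which is dual to the injection $M_0 \to M_1$ with cokernel $N$. So $T_1 \to T_0$ is faithfully flat with kernel $D_\ZZ(N)$.

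\emph{Exactness of the row.} Surjectivity of $p$ (as fppf sheaves) follows from the big cell $U^-T U$ of $G_0$ together with the surjectivity of $T_1 \to T_0$: the image of $p$ contains an open dense subscheme fiberwise and is a subgroup, so $p$ is faithfully flat. Since $\ker(p) \cap T_1 = D_\ZZ(N)$ and the kernel of $p$ lies in the center, we get $\ker p = D_\ZZ(N)$. Equivalently, the kernel of $p$ is central because it is of multiplicative type and meets the big cell only in $T_1$.

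\emph{The column.} The derived group $DG_1$ has root datum determined by $R_1$ and $R_1^\vee$. The coroot lattice of $G_1$ is the image of $M_0^{sc,\vee} \to M_1^\vee$, and this map is injective by (\ref{diag1}). So the simply connected cover $G_0^{sc}=G_1^{sc}$ maps to $G_1$ with trivial kernel on the torus, hence injectively, and $DG_1 \cong G_0^{sc}$ is simply connected. The coradical $G_1/DG_1$ is the torus whose cocharacter lattice is $M_1^\vee/M_0^{sc,\vee} = M^\vee$, by the middle column of (\ref{diag1}). So $\corad(G_1)=D_\ZZ(M)$.

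\emph{Equivariance.} For $\gamma \in \Gamma \subset E_0$, Lemma \ref{lem_action} says that $\Psi_0\to\Psi_1$ is $\Gamma$-equivariant as a map of pinned root data. The uniqueness in Chevalley's theorem then gives $p\circ\gamma_1=\gamma_0\circ p$, where $\gamma_i$ is the pinned automorphism of $G_i$ attached to $\gamma$. For the adjoint part: $G_1^{ad}=G_0^{ad}$ because both have the same adjoint root datum. Conjugation by $g \in G_1(R)$ commutes with $p$, and since $\ker p$ is central, the induced action factors through $G_1^{ad}$. This gives equivariance for $G_0^{ad}\rtimes\Gamma$.

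\emph{Action on $D_\ZZ(M)$ and $D_\ZZ(N)$.} The group $G_0^{ad}$ acts trivially on the central torus $D_\ZZ(N)$ and on the coradical $D_\ZZ(M)$; this is the analogue of Lemma \ref{lem_outer}, since inner automorphisms act trivially on the center and on the abelianization. So the action factors through $\Gamma$. On the tori, the action of $\Gamma$ is dual to its action on $N$ and $M$. By construction these modules are $\Gamma_\sharp$-modules pulled back to $\Gamma$, so the action factors through $\Gamma_\sharp$.

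\emph{Main obstacle.} The main technical point is identifying $\ker p$ exactly and proving $p$ faithfully flat over $\ZZ$, not just fiberwise. I would handle this fiberwise: $p$ is a homomorphism of smooth affine groups that is surjective on every fiber, with flat kernel $D_\ZZ(N)$. The fibral criterion of flatness then applies, or alternatively one can invoke SGA3 XXII 4.3 on central isogenies extended to central extensions by tori.
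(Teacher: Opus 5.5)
Your proposal is correct and follows the paper's argument for the part the paper actually proves: $\Gamma$-equivariance of $p$ via the uniqueness in Chevalley's dictionary, triviality of the $G_1^{ad}=G_0^{ad}$-action on the central torus $\ker p=D_\ZZ(N)$ and on $\corad(G_1)=D_\ZZ(M)$, and factorization of the $\Gamma$-action through $\Gamma_\sharp$. The paper simply takes exactness of the diagram from the construction, whereas you sketch it by hand; two steps there are loose (you assert rather than prove that $\ker p$ is central, and $\ker(T_0^{sc}\to T_1)$ is trivial because $M_1^\vee/M_0^{sc,\vee}=M^\vee$ is torsion-free, not merely because $M_0^{sc,\vee}\to M_1^\vee$ is injective), and your assignment of $D_\ZZ(N)$ as the central kernel and $D_\ZZ(M)$ as the coradical is the correct one, whereas the paper's written proof swaps these two tori.
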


\begin{proof} 
Only the last point, the equivariance, remains to be checked.
The $S$-group $G_1^{ad} \simlgr G_0^{ad}$ acts
trivially on the central $S$-subtorus $D_\ZZ(M)$
of $G_1$ and on the coradical $S$-torus  $\corad(G_1)=D_\ZZ(N)$.
On the other hand, $\Gamma$
acts on  $D_\ZZ(M)$ (resp.\ $D_\ZZ(N)$) 
through  the action of $\Gamma_\sharp$ 
 on $M$ (resp.\ $N$). Thus
$\Aut_\Gamma(G_0)$ acts on  $D_\ZZ(M)$ (resp.\ $D_\ZZ(N)$) 
through  the action of $\Gamma_\sharp$ 
 on $M$ (resp.\ $N$).
\end{proof}
 
\newpage

\subsection{The connected case}

\begin{stheorem}\label{main_connected} Let $S$ be a connected scheme and let $G$
be a reductive $S$-group scheme. Let $G_0$ be  the 
Chevalley form of $G$ and  
consider the $E_{0, \sharp}$-torsor
$E_\sharp =\Isomout_\sharp(G_0,G)$.

\smallskip

\noindent (1) The following conditions are equivalent:

\sm

(i) $G$ is linear;

\sm

(ii) The $E_{0,\sharp}$-torsor
 $E_\sharp$ is split by a finite Galois cover;

\sm

(iii) The $S$-module $\pi_1(G)$ is split by a
finite Galois cover;

\sm

(iv) $G$ admits a flasque resolution.

\smallskip

\noindent (2) Furthermore, assume that 
the $S$-module $\pi_1(G)$ is split by a
finite Galois cover $S'/S$.
Then $G$ admits a flasque resolution 
$$
1 \to F \to \widetilde G \to G \to 1
$$
such that $F$ and $\corad(\widetilde G)$ are split by $S'/S$.
\end{stheorem}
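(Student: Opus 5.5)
The proof of (1) will run around the cycle (i)$\Rightarrow$(ii)$\Rightarrow$(iii)$\Rightarrow$(ii)$\Rightarrow$(iv)$\Rightarrow$(i). Statement (2) will be obtained along the way from the construction used in (ii)$\Rightarrow$(iv), applied to the Galois group of $S'/S$.

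\emph{(i)$\Rightarrow$(ii).} By \cite[Theorem 4.1]{Gi4}, linearity of $G$ means its quasi-split form is isotrivial, i.e.\ the $E_0$-torsor $E=\Isomout(G_0,G)$ is split by a finite étale cover. Since $S$ is connected, we may refine this cover to a connected finite Galois cover. Pushing forward to $E_{0,\sharp}$ gives (ii).

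\emph{(ii)$\Leftrightarrow$(iii).} The direction (ii)$\Rightarrow$(iii) is immediate, since $\pi_1(G)={}^{E_\sharp}\pi_1(G_0)$. For the converse, note that $E_{0,\sharp}\subset\Aut(\pi_1(G_0))$ is a subgroup, and the $\Aut(\pi_1(G_0))$-torsor $\Isom(\pi_1(G_0),\pi_1(G))$ contains the $E_{0,\sharp}$-torsor $E_\sharp$. If $S'/S$ is a connected finite Galois cover splitting $\pi_1(G)$, then the torsor $\Isom(\pi_1(G_0),\pi_1(G))$ is trivial over $S'$. I then need $E_\sharp(S')\neq\emptyset$. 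Here is the argument I would try. Over the connected scheme $S'$, the $E_{0,\sharp}$-torsor $E_\sharp$ is given by a continuous action of $\pi_1^{et}(S')$ on a set, through $E_{0,\sharp}$. Its image in $\Aut(\pi_1(G_0))$ is trivial, because it is the monodromy of the now-trivial $\Isom$-torsor. So the monodromy homomorphism $\pi_1^{et}(S')\to E_{0,\sharp}\subset \Aut(\pi_1(G_0))$ is trivial, and $E_\sharp$ is split over $S'$.

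This step is the subtle one. The twisted-constant torsors on a non-normal connected scheme are described by the SGA1 fundamental group only in the finite (or locally constant finite-fiber) case, and $E_{0,\sharp}$ is infinite. What I would do first is work with the quotient $E_\sharp/\{1\}$ together with the fact that $\Isom(\pi_1(G_0),\pi_1(G))|_{S'}$ is constant. The map $E_\sharp\to \Isom(\pi_1(G_0),\pi_1(G))$ is a monomorphism of étale sheaves, so over $S'$ it identifies $E_\sharp$ with a subsheaf of a constant sheaf $\Aut(\pi_1(G_0))_{S'}$ that is stable under translation by $E_{0,\sharp}$. That subsheaf is then a union of cosets. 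Since $S'$ is connected, it is a single constant coset, and picking it gives a section.

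Once $E_\sharp$ is split by a Galois cover $S'/S$ with group $H$, the torsor is defined by a homomorphism $H\to E_{0,\sharp}$. Its image $\Gamma_\sharp$ is finite, so Lemma \ref{lem_reduction} gives a reduction of $P$ to $\Aut_\Gamma(G_0)$, say by an $\Aut_\Gamma(G_0)$-torsor $Q$.

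\emph{(ii)$\Rightarrow$(iv) and statement (2).} Choose a flasque resolution of $\Gamma_\sharp$-modules $0\to N^\vee\to M^\vee\to\pi_1(G_0)\to 0$, with $M$ permutation and $N$ flasque; this is the standard Colliot-Thélène–Sansuc construction for the finitely generated module $\pi_1(G_0)$. Form Proposition \ref{prop_michel} and twist its equivariant diagram by $Q$. Setting $\widetilde G={}^Q G_1$, $F={}^QD_\ZZ(N)$ and $\corad(\widetilde G)={}^QD_\ZZ(M)$, we get an exact sequence $1\to F\to\widetilde G\to G\to 1$. Its derived group is ${}^QG_0^{sc}$, which is simply connected. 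The action on the tori factors through $\Gamma_\sharp$, which is a quotient of $H$, so both tori are split by $S'$ and their character modules are $N$ and $M$ as $H$-modules. These are flasque and permutation (inflation preserves both properties), so this is a flasque resolution, which proves (2).

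\emph{(iv)$\Rightarrow$(i).} The tori $F$ and $\corad(\widetilde G)$ are isotrivial by definition, and $\pi_1(\widetilde G)=\widehat{\corad(\widetilde G)}^\vee$. The sequence induces $0\to\pi_1(F)\to\pi_1(\widetilde G)\to\pi_1(G)\to 0$, so $\pi_1(G)$ is split by a finite Galois cover. This gives (iii), hence (ii), and then Lemma \ref{lem_reduction} shows that $G$ is linear.
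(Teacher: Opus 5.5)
Your proof is correct. For (i)$\Rightarrow$(ii)$\Rightarrow$(iii)$\Rightarrow$(iv) and for (2) it follows the paper. Your single-constant-coset argument for (iii)$\Rightarrow$(ii) is the paper's observation in different language: since $E_{0,\sharp}\subset\Aut(\pi_1(G_0))$, the map $H^1(X,E_{0,\sharp})\to H^1(X,\Aut(\pi_1(G_0)))$ has trivial kernel for connected $X$. The monodromy detour can be dropped, and you were right to distrust it for infinite groups over non-normal bases.

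The genuine difference is (iv)$\Rightarrow$(i). The paper works with radical tori:
\begin{itemize}
\item the radical $\widetilde C$ of $\widetilde G$ and the quasi-trivial coradical are isogenous both ways, so $\widetilde C$ is split by the same connected Galois cover \cite[Lemma A.2.(ii)]{GN};
\item the radical of $G$ is a quotient of $\widetilde C$, so it is split too;
\item the linearity criterion \cite[Theorem 4.1]{Gi4} then gives (i) directly.
\end{itemize}
You instead push the resolution through the fundamental group. You use exactness of $0\to\pi_1(F)\to\pi_1(\widetilde G)\to\pi_1(G)\to 0$ (checkable on geometric fibres) and $\pi_1(\widetilde G)\cong\widehat{\corad(\widetilde G)}^\vee$ to reach (iii). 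You then return to (i) through (ii) and the last assertion of Lemma~\ref{lem_reduction}. This keeps everything inside the $\pi_1$/outer-torsor formalism and avoids the isogeny lemma. The cost is an exactness property of the Borovoi--Gonz\'alez-Avil\'es fundamental group that the paper never needs. The paper's route is shorter and uses only tori.

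Two small remarks:
\begin{itemize}
\item For (i)$\Rightarrow$(ii), your direct appeal to \cite[Theorem 4.1]{Gi4} is the right justification. The paper cites Lemma~\ref{lem_reduction} there, but that lemma's final assertion gives the converse.
\item What you choose is, in standard terminology, a \emph{coflasque} resolution of the module $\pi_1(G_0)$ ($M^\vee$ permutation, $N^\vee$ coflasque). The conditions you impose on $M$ and $N$ are nonetheless the correct ones.
\end{itemize}
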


\begin{proof}
(1) Let $s$ be a quasi-geometric point of $S$ and consider the 
associated Grothendieck fundamental group $\pi_1(S,s)$ \cite[Tag0BQ8]{Stacks}.

\sm

\noindent $(iv) \Longrightarrow (i)$.
We assume that $G$ admits a 
flasque resolution $1 \to F \to \widetilde G \to
G \to 1$. Then the coradical torus
$Q=\widetilde G/ D\widetilde G$ is split by a connected Galois cover
$S'$  of $S$. Let $\widetilde 
C$ be the radical $S$-torus of 
$ \widetilde G$. We claim that $C_{S'}$ is 
split as well.
The induced map $\widetilde C \to Q$
is an isogeny whose kernel $\mu$ is finite of multiplicative type. Since $S$ is connected,
$\mu$ is of finite exponent $n$ so that 
there exists an isogeny $Q \to \widetilde C$.  
According to \cite[Lemma A.2.(ii)]{GN},
$\widetilde C_{S'}$ is split.
Since the radical $C$ of $G$ is a quotient 
of $\widetilde C$, the same argument shows
that $C_{S'}$ is a split torus.
Finally
the application of the linearity criterion
\cite[Theorem 4.1]{Gi4} shows that 
$G$ is linear.

\sm

\noindent $(i) \Longrightarrow (ii)$.
It is a special case of Lemma \ref{lem_reduction}.

\sm

\noindent $(ii) \Longrightarrow (iii)$.
We have seen that $\pi_1(G)$ is the twist
of $\pi_1(G_0)$ by $E_{0,\sharp}$.
If $E_{0,\sharp}$ is split by a Galois cover,
so is the $S$-module $\pi_1(G)$.

\sm

\noindent $(iii) \Longrightarrow (iv)$.
We use that the map $E_0 \to \Aut(\pi_1(G_0))$ is injective so that the map
$H^1(X, E_0) \to H^1(X,\Aut(\pi_1(G_0)) )$
has trivial kernel for each connected 
$S$-scheme $X$.  It implies that 
$E_{\sharp,X}$ is trivial if and only 
if $\pi_1(G)_X$ is isomorphic
to  $\pi_1(G_0)_X$.

Our assumption is that  $\pi_1(G)$ becomes
isomorphic to $\pi_1(G_0)$ after a Galois cover 
$S'/S$ so that  $E_{\sharp,S'}$ is trivial.  
Let $s'$ be a quasi-geometric point of
$S'$ above $s$. It gives rise
to a continuous homomorphism
$\pi_1(S,s) \to \Gal(S'/S)$.

The  $S$-torsors under $E_{0,\sharp}$
trivialized by $S'/S$ are classified
by $\Hom_{ct}( \Gal(S'/S), E_{0,\sharp})/E_{0,\sharp}$
so that $E_\sharp$ corresponds to some continuous homomorphism
$\phi: \pi_1(S,s) \to \Gal(S'/S) \to E_{0,\sharp}$.
We denote by 
$\Gamma_{\sharp}$ the (finite) image of $\phi$
and by $\Gamma$ its preimage in $E_0$.

Since the $E_{0, \sharp}$-torsor $E_\sharp$ admits a reduction to 
the subgroup 
$\Gamma_{\sharp}$, Lemma \ref{lem_reduction} shows that 
the $\Aut(G_0)$-torsor $P=\Isom(G_0,G)$ admits a reduction to 
some
$\Aut_\Gamma(G_0)$-torsor $Q$. We consider now the $\Aut_\Gamma(G_0)$-sequence
of reductive $\ZZ$-group schemes of Proposition \ref{prop_michel}
$$
1 \to D_\ZZ(N) \to G_1 \to G_0 \to 1.
$$
Twisting it by the $S$-torsor $Q$ provides a resolution
$$
1 \to F  \to \widetilde  G \to G \to 1. 
$$
By construction, $\widehat F(S')$ is a $\Gamma_\sharp$-module which is flasque,
$D\widetilde G$ is simply connected and  $\widehat{\corad(\widetilde G) }(S')$ is a 
permutation $\Gamma_\sharp$-module. Thus $F$ is a flasque $S$-torus split by $S'/S$
and similarly $\corad(\widetilde G)$ is a permutation $S$-torus. 

\sm

\noindent (2) It follows from (1) by inspection of the proof of  $(iii) \Longrightarrow (iv)$.

\end{proof}

\begin{scorollary}\label{cor_connected} Let $S$ be a  scheme whose connected components are clopen. Let $G$ be a reductive $S$-group scheme. 
Then $G$  is linear
if and only if $G$ admits a flasque resolution.
\end{scorollary}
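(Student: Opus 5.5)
The plan is to reduce to Theorem \ref{main_connected} one connected component at a time, using that the components are clopen. Write $S=\bigsqcup_{i\in I} S_i$, where the $S_i$ are the connected components. Since each $S_i$ is open and closed, $S$ is the scheme-theoretic disjoint union of the $S_i$. Hence giving an $S$-scheme, or a morphism of $S$-schemes, is the same as giving such data over each $S_i$ separately. In particular, $S$-group schemes and exact sequences of them over $S$ correspond to families of such objects over the $S_i$.

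\emph{Flasque resolution implies linear.} Suppose $1\to F\to \widetilde G\to G\to 1$ is a flasque resolution over $S$. Restricting to $S_i$ gives a flasque resolution of $G_{S_i}$: the definition of flasque torus and of quasi-trivial group is stated component by component, and it is stable under base change. By Theorem \ref{main_connected}(1), (iv)$\Rightarrow$(i), each $G_{S_i}$ is linear. We are given monomorphisms $G_{S_i}\to \GL(\cE_i)$ with $\cE_i$ locally free of finite rank on $S_i$. Gluing gives $\cE$ on $S$ and a monomorphism $G\to\GL(\cE)$, but $\cE$ may fail to have bounded rank. That is harmless, because "locally free of finite rank" only requires finite rank locally, and each point lies in an open $S_i$ on which $\cE$ has finite rank. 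So $G$ is linear.

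\emph{Linear implies flasque resolution.} If $G\to\GL(\cE)$ is a monomorphism, its restriction to each $S_i$ shows $G_{S_i}$ is linear. By Theorem \ref{main_connected}(1), (i)$\Rightarrow$(iv), each $G_{S_i}$ has a flasque resolution $1\to F_i\to\widetilde G_i\to G_{S_i}\to1$. Glue over the clopen decomposition: $F=\bigsqcup F_i$ and $\widetilde G=\bigsqcup\widetilde G_i$, with the glued maps. The result is an exact sequence of $S$-group schemes, since exactness as fppf sheaves can be checked on an open cover. Moreover $\widetilde G$ is reductive, $D\widetilde G$ is semisimple simply connected, and $\corad(\widetilde G)$ is quasi-trivial, because all of these are checked on the connected components.

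\emph{Main point to check.} The one step that needs care is that the definitions of flasque and quasi-trivial tori are made per connected component $Z$ with a Galois cover of $Z$. The torus $F$ built by gluing therefore satisfies the definition directly, with no uniformity in $i$ required. If one instead insisted that tori have locally constant rank, this still holds, since the rank is constant on each clopen $S_i$.
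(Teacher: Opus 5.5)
Your proposal is correct and follows the paper's own proof: apply Theorem \ref{main_connected}(1), (i)$\Leftrightarrow$(iv), to each clopen connected component and glue. The details you add are points the paper leaves implicit: gluing is legitimate because $S=\bigsqcup S_i$, and the glued $\cE$ is still locally free of finite rank even when the ranks are unbounded across infinitely many components.
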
 

\begin{proof} It follows 
from Theorem \ref{main_connected}.(1), 
$(i) \Longleftrightarrow (iv)$, applied
to each connected component of $S$.
\end{proof}

\subsection{The case of a qcqs scheme}

\begin{stheorem}\label{thm_qcqs}
 Assume that $S$ is a quasi-compact and quasi-separated scheme.
 Let $G$ be an $S$-reductive group scheme which is linear.
 Then $G$ admits a flasque resolution.
\end{stheorem}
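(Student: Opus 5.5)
The plan is to reduce to the connected case, Theorem \ref{main_connected}, by a limit argument over connected components, and then to glue finitely many local resolutions using quasi-compactness. Since flasqueness and quasi-triviality of tori are defined component by component, resolutions constructed on the pieces of a finite clopen partition of $S$ glue to a flasque resolution of $G$. The Chevalley form of $G$ is locally constant on $S$. Since $S$ is quasi-compact, we may therefore first partition $S$ into finitely many clopen pieces on each of which $G$ is a form of a fixed $G_{0,S}$. We assume this from now on.

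Fix a point $s \in S$ and let $Z$ be its connected component. Because $S$ is qcqs, its underlying space is spectral. In a spectral space, connected components coincide with quasi-components, so $Z=\bigcap_{U} U$, where $U$ runs over the clopen neighbourhoods of $s$. Each $U$ is qcqs, the transition maps between them are affine, and $Z=\limproj U$ as schemes. The base change $G_Z$ is linear. By Theorem \ref{main_connected}, $(i)\Rightarrow(ii)$, the torsor $E_{\sharp,Z}$ is split by a finite Galois cover. The proof of $(iii)\Rightarrow(iv)$ then gives a finite subgroup $\Gamma_\sharp\subset E_{0,\sharp}$ such that $E_{\sharp,Z}$ admits a reduction to $\Gamma_\sharp$. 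By Lemma \ref{lem_reduction}, this reduction is a point of $(E_\sharp/\Gamma_\sharp)(Z)$.

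The key step, and the one I expect to be the main obstacle, is spreading this section out from $Z$ to some clopen $U$. The quotient $E_\sharp/\Gamma_\sharp$ is a twisted form of the constant scheme $(E_{0,\sharp}/\Gamma_\sharp)_S$. It is therefore étale, and in particular locally of finite presentation, over $S$. The standard limit formalism (EGA IV, 8.14.2) gives
$$
(E_\sharp/\Gamma_\sharp)(Z)=\varinjlim_U (E_\sharp/\Gamma_\sharp)(U),
$$
so the section is defined over some clopen neighbourhood $U_s$ of $s$. To apply this formalism one needs care: either the relevant piece of $E_\sharp/\Gamma_\sharp$ is of finite presentation (it is separated and étale, and one may restrict to a quasi-compact open containing the image of the section), or one uses the variant for locally finitely presented targets. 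Lemma \ref{lem_reduction} then gives a reduction of $P_{U_s}$ to some $\Aut_{\Gamma}(G_0)$-torsor $Q_s$, where $\Gamma$ is the preimage of $\Gamma_\sharp$. Twisting the equivariant sequence of Proposition \ref{prop_michel} by $Q_s$ yields an exact sequence
$$
1\to F\to\widetilde G\to G_{U_s}\to 1.
$$
Exactly as in the proof of Theorem \ref{main_connected}, $D\widetilde G$ is simply connected. Moreover, on each connected component of $U_s$, the tori $F$ and $\corad(\widetilde G)$ are split by the connected Galois cover attached to the torsor $Q_s$ modulo $G_0^{ad}$, and their character modules are respectively flasque and permutation over the image group. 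This last claim needs a short check when the component of $U_s$ is smaller than the image group would suggest, since restricting the Galois group to a subgroup preserves flasqueness and permutation modules. Hence this is a flasque resolution of $G_{U_s}$.

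Finally, the clopen sets $U_s$ cover $S$, so by quasi-compactness finitely many of them, say $U_{s_1},\dots,U_{s_n}$, suffice. Replacing them by the disjoint clopens $U_{s_i}\setminus\bigcup_{j<i}U_{s_j}$ and restricting the resolutions gives flasque resolutions on a finite clopen partition of $S$. Their disjoint union is a flasque resolution of $G$.
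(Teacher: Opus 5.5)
Your proof is correct, but it takes a different route from the paper's. The paper approximates the whole situation by noetherian schemes. It writes $S=\varprojlim S_i$ with $S_i$ of finite type over $\ZZ$ and descends $G$ to $G_{i_0}$. It then uses the appendix (Lemmas \ref{lem_reductive} and \ref{lem_linear}) to make $G_{i_0}$ reductive and linear, applies Corollary \ref{cor_connected} on $S_{i_0}$ (whose connected components are clopen), and pulls the resulting resolution back to $S$.

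You never approximate $G$ or the base by noetherian schemes. Instead you approximate each connected component $Z$ of $S$ by its clopen neighbourhoods, using that components equal quasi-components in a spectral space. You apply Theorem \ref{main_connected} on $Z$ and spread out only the reduction of $E_\sharp$ to the finite group $\Gamma_\sharp$, that is, a section of the \'etale $S$-scheme $E_\sharp/\Gamma_\sharp$. Then you twist Proposition \ref{prop_michel} over the neighbourhood and patch finitely many pieces by quasi-compactness.

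What your route buys:
\begin{itemize}
\item It dispenses with the appendix entirely, since reductivity and linearity never have to be spread out.
\item Its only limit input is $\mathrm{Mor}_S(\varprojlim U,Y)=\varinjlim \mathrm{Mor}_S(U,Y)$ for $Y$ locally of finite presentation over $S$. This holds as stated for qcqs $U$ with affine transition maps, so the extra precaution you mention about finite presentation is unnecessary.
\item It shows directly that, on each piece of a finite clopen partition of $S$, the resolution is a twist of the equivariant Chevalley sequence by a reduction of $\Isom(G_0,G)$.
\end{itemize}

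In exchange you need two extra inputs. One is the spectral-space fact. The other is the check that flasque and permutation lattices remain so under restriction to subgroups, and also under inflation along $\Gamma\to\Gamma_\sharp$ if you use the $\Gamma$-torsor $Q_s/G_0^{ad}$ rather than its $\Gamma_\sharp$-pushout. Both hold for lattices: for instance $\hat H^{-1}(H',N)=\hat H^{-1}(\bar H',N)$ when $N$ is torsion-free and the kernel of $H'\to\bar H'$ acts trivially. In the paper's route this same check is hidden in the base-change stability of flasque and permutation tori.

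The paper's argument is more mechanical once the permanence lemmas are available. It also only invokes Theorem \ref{main_connected} over noetherian connected bases, whereas you apply it to the possibly non-noetherian limit scheme $Z$. That is legitimate, since the theorem is stated for arbitrary connected schemes.
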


\begin{proof}
 We perform the  noetherian reduction trick.
According to \cite[Tag 01ZA]{Stacks}, there
exists a directed set $I$ and an inverse system of schemes 
$(S_i, f_{ii'} )$ over $I$ such that

\sm

(a) the transition morphisms $f_{ii'}$ are affine;

\sm

(b) each $S_i$ is of finite type over $\ZZ$ (in particular noetherian and qcqs), and

\sm

(c) $S = \limproj_{i \in I} S_i$. 
 
 \sm
 
 We use now  Noetherian reduction techniques 
 so that we can assume without loss of generality
  that $G=  G_{i_0} \times_{S_{i_0}} S$ where
   $G_{i_0}$ is an affine $S_{i_0}$-group scheme  of finite presentation  \cite[\S VI$_B$.10.1]{SGA3}.
 Lemmas \ref{lem_linear} and
 \ref{lem_reductive}  enable us to assume
 furthermore that each 
  $G_{i}$ is reductive and linear.
Since $S_{i_0}$ is of finite type over $\ZZ$, its connected components are clopen (combine \cite[Tags 04MF and 04ME]{Stacks}).
Corollary \ref{cor_connected} applies
and shows that $G_{i_0}$ admits a flasque
resolution. Since the flasque property is stable under base change,
we conclude that 
 $G= G_{i_0} \times_{S_{i_0}} S$  admits a flasque resolution. 
\end{proof}

\section{Application of flasque resolutions to $R$-equivalence}

Flasque resolutions of tori were introduced
by Colliot-Th\'el\`ene and Sansuc for 
studying $R$-equivalence for algebraic tori 
\cite{CTS1} and this was generalized
to reductive algebraic groups \cite[Th\'eor\`eme 8.1]{CT2008}.

With A.~Stavrova,  we  generalized  $R$-equivalence
to group schemes over  a ring 
in connection with flasque resolutions in  the torus case \cite[\S 3.2]{GS}. 
The purpose of this section is to deal with 
reductive group schemes over a ring.

\subsection{Definition}
Let $B$ be a ring (unital, commutative).
We denote by $\Sigma_B$ the multiplicative subset
of polynomials $P \in B[T]$ satisfying $P(0), P(1) \in B^\times$.
Note that evaluation at $0$ (and $1$) extends from $B[t]$
to the localization $B[t]_\Sigma$.

Let $\cF$ be a $B$-functor in sets.
We say that two points $x_0,x_1 \in \cF(B)$ are \emph{directly $R$--equivalent} if
there exists
$x \in \cF\bigl( B[t]_\Sigma \bigr)$
such that  $x_0=x(0)$ and $x_1=x(1)$.
The $R$-equivalence on $\cF(B)$ is the equivalence relation
generated by this elementary relation.

\medskip

This construction is functorial in $B$
and if $\cF$ is a group functor,
$\cF(B)/R$ carries a natural group structure
 \cite[\S 1]{GS}.

\subsection{Flasque resolution}

Let $G$ be a reductive $B$-group scheme
assumed linear. According to Theorem \ref{thm_qcqs}, $G$ admits a flasque resolution
\begin{equation}\label{seq150}
1 \to F \to \widetilde G \to G \to 1.
\end{equation}
It gives rise to the following sequence of pointed sets
\cite[Proposition 3.4.3]{Gd}  

\begin{equation}\label{diag162}
1 \to F(B) \to \widetilde G(B) \to G(B) \xrightarrow{\varphi} H^1(B,F) \to H^1(B, \widetilde G) ;
\end{equation}
the map $\varphi$ is called the characteristic map.
The point is that $\varphi$
defines a map of $B$-functors between $G$
and the $B$-functor $\cF(C)= H^1(C,F)$.
By functoriality, we get the following commutative diagram 
\begin{equation}\label{diag163}
\xymatrix{
G(B)\ar[d] \ar[r]^{\varphi} & H^1(B,F) \ar[d] \\
G(B)/R \ar[r] \ar@{.>}[ru]& \cF(B)/R .
  }
\end{equation}

\noindent If   $H^1(B,F) \to \cF(B)/R$ is an isomorphism
(equivalently the $R$-equivalence is the trivial
relation on $H^1(B,F)$),
then $\varphi$ factorizes through $G(B)/R$.
To continue we then make a stronger assumption on 
the base ring $B$.

\begin{slemma} \label{lem_regular} Assume that $B$ is a regular integral domain. For a flasque $B$-torus as above, the map   $H^1(B,F) \to \cF(B)/R$ is an isomorphism.
\end{slemma}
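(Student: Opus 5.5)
The plan is to show directly that the $R$-equivalence on $\cF(B)=H^1(B,F)$ is trivial. The map $H^1(B,F)\to \cF(B)/R$ is tautologically surjective, so the content is injectivity: given $x\in H^1\bigl(B[t]_\Sigma,F\bigr)$, we must show $x(0)=x(1)$ in $H^1(B,F)$. The strategy has two steps. First we extend $x$ to a class over the polynomial ring $B[t]$. Then we use homotopy invariance of $H^1(-,F)$ for flasque tori over regular bases, in the spirit of the torus case treated in \cite[\S 3.2]{GS}.

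\emph{Step 1 (extension).} The ring $B[t]_\Sigma$ is the filtered colimit of the localizations $B[t]_P$ for $P\in\Sigma$. Since $F$ is of finite presentation, $H^1(-,F)$ commutes with this filtered colimit, so $x$ comes from some $x_P\in H^1\bigl(B[t]_P,F\bigr)$. Here $\Spec B[t]_P=U$ is an open subscheme of $X=\mathbb{A}^1_B$, and $X$ is a regular integral scheme. We then invoke the Colliot-Th\'el\`ene--Sansuc result that, for a flasque torus $F$ over a regular integral scheme $X$ and a dense open $U\subset X$, the restriction $H^1(X,F)\to H^1(U,F)$ is surjective. The mechanism behind it is as follows. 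Choose a flasque/coflasque resolution $1\to F\to P\to Q\to 1$ with $P$ quasi-trivial. Torsors under $P$ on $U$ extend to $X$ because Picard groups of regular schemes restrict surjectively. The obstruction to extending then lies in a group computed from $\widehat F$ via divisors supported on $X\setminus U$, and this group vanishes because $H^1(H,\widehat F)=0$ for flasque modules. So $x_P$ lifts to $y\in H^1(B[t],F)$. Since the points $0$ and $1$ lie in $U$ (as $P(0),P(1)\in B^\times$), we get $x(0)=y(0)$ and $x(1)=y(1)$.

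\emph{Step 2 (homotopy invariance).} Next we show that $H^1(B,F)\to H^1(B[t],F)$ is bijective; then $y$ is constant, and $y(0)=y(1)$ follows. Injectivity is immediate from the section $t\mapsto 0$. For surjectivity we use the same resolution $1\to F\to P\to Q\to 1$ with $P$ quasi-trivial and $Q$ coflasque, together with the associated long exact sequences over $B$ and over $B[t]$. The input needed is the following:
\begin{itemize}
\item $P(B)=P(B[t])$, $Q(B)=Q(B[t])$ (units of $C[t]$ over reduced normal $C$ are those of $C$, applied to the Weil restrictions over the splitting cover, which stays regular);
\item $H^1(B,P)=\mathrm{Pic}(B')\to \mathrm{Pic}(B'[t])$ is an isomorphism for regular $B'$ finite \'etale over $B$.
\end{itemize}
A diagram chase then gives surjectivity on $H^1(-,F)$.

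The main obstacle is Step 1, namely the surjectivity of $H^1(X,F)\to H^1(U,F)$ over a general regular integral base, which is not necessarily of finite type or excellent. I would first try to quote it from Colliot-Th\'el\`ene--Sansuc, where it is proved for regular integral schemes via the purity argument with divisors and the vanishing $H^1(H,\widehat F)=0$ for every subgroup $H$. If this is not available, I would reprove it via the resolution above, reducing to $H^1(U,Q)$ and codimension-one purity on the regular scheme $X$.
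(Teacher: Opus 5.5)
Your proposal is correct and follows the paper's proof. Both arguments descend from $B[t]_\Sigma$ to some $B[t]_f$ by the limit argument, lift to $H^1(B[t],F)$ by the surjectivity result \cite[Theorem 2.2.(i)]{CTS2}, and conclude by the homotopy invariance $H^1(B,F)\simeq H^1(B[t],F)$; the paper quotes this last step as \cite[Lemma 2.4]{CTS2}, whereas you rederive it by a correct diagram chase with a quasi-trivial resolution.

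One slip in your heuristic sketch for Step 1: flasqueness of $\widehat F$ means $\widehat H^{-1}(H,\widehat F)=0$, equivalently $H^1(H,\widehat F^{\vee})=0$ for the cocharacter lattice. It is this vanishing that kills the obstruction supported on $X\setminus U$, not $H^1(H,\widehat F)=0$, which is the coflasque condition.
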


\begin{proof} We show that the $R$-equivalence
is the trivial relation on $H^1(B,F)$.
We are given a class $\gamma \in 
H^1(B[t]_{\Sigma_B},F)$ and want to 
show that $\gamma(0) = \gamma(1)$.
Since \'etale cohomology for $H^1$
 commutes with limits \cite[Theorem 2.1]{Mg},
 there exists $f \in \Sigma_B$
 such that $\gamma$ comes by base change from
  a class $\gamma_\sharp \in  H^1(B[t]_f,F)$.
  We consider now the maps
  $$
  H^1(B,F) \to H^1(B[t],F)  \to  H^1(B[t]_f,F) .
  $$
  The first map is an isomorphism in view
  of \cite[Lemma 2.4]{CTS2} and the second is onto
  ({\it ibid}, Theorem 2.2.(i)).
  It follows that the map $H^1(B,F) \to H^1(B[t],F)  \to  H^1(B[t]_f,F)$ is onto so that
  $\gamma_\sharp$ and $\gamma$ are constant classes.
  Thus $\gamma(0)=\gamma(1)$.
\end{proof}

We extend now \cite[Th\'eor\`eme 8.1]{CT2008}.

\begin{sproposition}\label{prop_regular}
Assume that $B$ is a regular integral domain.
Then the flasque resolution \eqref{seq150}
induces an exact sequence of groups
$$
\widetilde G(B)/R \to 
G(B)/R \to \ker\bigl(H^1(B,F) \to H^1(B,\widetilde G) \bigr).
$$
\end{sproposition}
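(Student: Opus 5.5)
We have to show two things: the maps are well defined on $R$-equivalence classes, and the sequence is exact at $G(B)/R$. For the group structure and well-definedness we rely on \cite[\S 1]{GS} and on Lemma \ref{lem_regular}. The map $\widetilde G(B)/R \to G(B)/R$ is induced by functoriality of $R$-equivalence.

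\textbf{Step 1: the characteristic map descends.} By Lemma \ref{lem_regular}, $R$-equivalence on $H^1(B,F)$ is trivial. Diagram \eqref{diag163} then shows that $\varphi$ factors through a map $G(B)/R \to H^1(B,F)$. Its image lies in $\ker\bigl(H^1(B,F)\to H^1(B,\widetilde G)\bigr)$ by exactness of \eqref{diag162}. Since $F$ is central in $\widetilde G$ (a normal torus in a connected reductive group), $\varphi$ is a group homomorphism. Indeed, $\varphi(g)$ is the class of the $F$-torsor $\pi^{-1}(g)$, and centrality makes $\pi^{-1}(gh)\cong \pi^{-1}(g)\wedge^F \pi^{-1}(h)$. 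So the induced map is a homomorphism.

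\textbf{Step 2: easy inclusion.} The composite $\widetilde G(B)/R \to G(B)/R \to H^1(B,F)$ is trivial, because $\varphi\circ\pi=1$ by \eqref{diag162}.

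\textbf{Step 3: reverse inclusion.} Let $g\in G(B)$ with $\varphi(g)=1$. By exactness of \eqref{diag162}, $g$ lifts to some $\widetilde g\in \widetilde G(B)$, so its class comes from $\widetilde G(B)/R$. This is immediate here because the kernel is computed at the level of $G(B)$ before passing to classes.

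The only subtle point is Step 1: showing the factored map is a group homomorphism, and matching the group structures on $G(B)/R$ from \cite{GS}. Both rest on the centrality of $F$ and on the triviality of $R$-equivalence from Lemma \ref{lem_regular}. The remaining exactness is formal from \eqref{diag162}.
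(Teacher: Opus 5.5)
Your proposal is correct and follows the paper's proof. Lemma~\ref{lem_regular} makes $\varphi$ factor through $G(B)/R$, exactness at $G(B)/R$ is the diagram chase with \eqref{diag162}, and your centrality argument supplies the justification, which the paper takes for granted, that $\varphi$ is a homomorphism; you could also note, as the paper does, that exactness of \eqref{diag162} gives $\ker\bigl(H^1(B,F)\to H^1(B,\widetilde G)\bigr)=\varphi(G(B))$, so the third term is indeed a commutative group.
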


\begin{proof} 
The proof is the same as in the field case.
Lemma \ref{lem_regular} and the above discussion show that the characteristic
map $\varphi: G(B) \to H^1(B,F)$
factorizes through $G(B)/R$. We can complete
the diagrams \eqref{diag162} 
and \eqref{diag163}  as follows

\begin{equation}
\xymatrix{
\widetilde G(B) \ar[d] \ar[r] & G(B)\ar[d] \ar[r]^{\varphi} & H^1(B,F) \ar[d] \ar[r] 
& H^1(B,\widetilde G)\\
\widetilde G(B)/R \ar[r] & G(B)/R 
\ar[r] & \cF(B)/R .
 }
\end{equation} 

\noindent By diagram chase we obtain the following exact
sequence of pointed sets
$$
\widetilde G(B)/R \to 
G(B)/R \to \ker\bigl(H^1(B,F) \to H^1(B,\widetilde G)\bigr).
$$
Since $\varphi$ is a group homomorphism,
$\ker\bigl(H^1(B,F) \to H^1(B,\widetilde G)\bigr)$
is a commutative group.
\end{proof}

\subsection{Case of fields of separable dimension
$\leq 2$}

We deal now with a base field $k$.
The notation $\scd_l(k)$ is for separable
dimension at a prime $l$, see \cite[\S 1.3.1]{Gi2238} for the definition. It coincides with the usual
cohomological dimension if $l$ is invertible 
in $k$. We slightly refine 
 \cite[Th\'eor\`eme 8.4]{CT2008}.

\begin{stheorem} \label{thm_scd}
Let $G$ be a semisimple $k$-group
and assume that $\scd_l(k) \leq 2$ for each torsion prime
of $DG$. Let 
$$
1 \to F  \to \widetilde G \to G \to 1
$$
be a flasque resolution of $G$.
\smallskip

\noindent (1) The map $D\widetilde G(k)/R \to 
\widetilde G(k)/R$ is onto.

\sm

\noindent (2) The sequence of Proposition \ref{prop_regular} induces an exact sequence
of groups
$$
D\widetilde G(k)/R \to 
G(k)/R \to H^1(k,F) \to 1.
$$ 
\end{stheorem}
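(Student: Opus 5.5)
Both parts rest on the exact sequence $1 \to D\widetilde G \to \widetilde G \to Q \to 1$ with $Q=\corad(\widetilde G)$ quasi-trivial, together with Proposition \ref{prop_regular} for $B=k$, a field (hence regular).

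For (1), I would first show that $\widetilde G(k) \to Q(k)$ is onto. The obstruction lies in $H^1(k, D\widetilde G)$. Since $D\widetilde G$ is semisimple simply connected and $\scd_l(k)\leq 2$ at its torsion primes, one would like to invoke the Bayer--Parimala/Serre Conjecture II to get $H^1(k,D\widetilde G)=1$, but that is not available in general. I would avoid it: because $Q$ is quasi-trivial, Hilbert 90 and Shapiro give $H^1(k,Q)=0$, so this vanishing cannot be used to lift points. Instead I would use that $Q$ is a rational variety, hence $Q(k)/R=1$ and every $q\in Q(k)$ is $R$-equivalent to $1$. The homogeneous space argument of Colliot-Th\'el\`ene (Th\'eor\`eme 8.4 of \cite{CT2008}) then yields: for $\widetilde G\to Q$ a $D\widetilde G$-torsor over the quasi-trivial $Q$, one has $\widetilde G(k)/R \simeq D\widetilde G(k)/R \times Q(k)/R$ up to the needed surjectivity. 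This uses Gille's result that $H^1(k(Q), D\widetilde G)$-type obstructions vanish under the $\scd\leq 2$ hypothesis at torsion primes, as in \cite{Gi2238}.

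Concretely, I would show that every $g\in \widetilde G(k)$ is $R$-equivalent to an element of $D\widetilde G(k)$. The image $q$ of $g$ lies in the open subset $Q(k)$ of a rational variety, so I would connect $q$ to $1$ by a rational curve and then lift that curve to $\widetilde G$. Such a lift exists because the generic fiber torsor over $k(t)$ is trivial: the field $k(t)$ has $\scd_l\le 3$, which is too large, so this is the step I expect to be the main obstacle. The fallback is the reduction used by Colliot-Th\'el\`ene: he uses the fact that $\widetilde G(k)\to Q(k)$ is surjective with $Q(k)$ generated by images of points from quasi-trivial tori inside $\widetilde G$. Concretely, I would choose a maximal torus $T\subset\widetilde G$ whose image maps onto $Q$ with kernel the simply connected part, so that $T^{sc}=T\cap D\widetilde G$ is quasi-trivial up to flasque issues, and use that $T(k)\to Q(k)$ is onto when $H^1(k,T\cap D\widetilde G)$ vanishes for suitable $T$. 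The existence of such a $T$ (a quasi-trivial maximal torus of $D\widetilde G$) is exactly where the hypothesis $\scd_l(k)\le 2$ enters, via \cite{Gi2238}.

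For (2), part (1) lets me replace $\widetilde G(k)/R$ by the image of $D\widetilde G(k)/R$ in Proposition \ref{prop_regular}. It then remains to show $H^1(k,F)\to H^1(k,\widetilde G)$ is trivial. Under the hypothesis, $H^1(k,\widetilde G)\to H^1(k,Q)=1$ has kernel the image of $H^1(k,D\widetilde G)$, and the latter vanishes by the theorem of \cite{Gi2238} on Serre's Conjecture II for $\scd\le 2$, applied here only to the image classes coming from the torus $F$. This gives surjectivity of $G(k)/R\to H^1(k,F)$ and hence exactness.
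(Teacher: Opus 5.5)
Part (1) has a genuine gap: none of the routes you sketch works. Throughout I keep your notation $Q=\corad(\widetilde G)$.

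\begin{itemize}
\item The rational-curve lifting fails for the reason you give yourself.
\item There is no result in \cite{Gi2238} that kills ``$H^1(k(Q),D\widetilde G)$-type obstructions''. That would be a form of Serre's Conjecture II over a field of larger dimension.
\item The fallback needs a maximal torus $T\subset\widetilde G$ with $T\cap D\widetilde G$ quasi-trivial, or at least with $H^1(k,T\cap D\widetilde G)=1$. Such a torus need not exist. Take $\widetilde G=\mathrm{GL}_1(D)$ for a quaternion division algebra $D$ over a $p$-adic field $k$; this is a flasque resolution of $\mathrm{PGL}_1(D)$ with $F=\GG_m$. Every maximal torus is $T=R_{L/k}(\GG_m)$ with $L/k$ a quadratic field. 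Then $T\cap D\widetilde G=R^1_{L/k}(\GG_m)$ is anisotropic, hence not quasi-trivial, and $H^1(k,R^1_{L/k}(\GG_m))=k^\times/N_{L/k}(L^\times)\simeq\ZZ/2$. So no single map $T(k)\to Q(k)=k^\times$ is onto.
\item Even when such a $T$ exists, you would still have to show that the lifts in $T(k)$ lie in $R\widetilde G(k)$.
\end{itemize}

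What is missing is a way to assemble norms from many extensions. Your torus argument is valid only over an extension $L$ where $\widetilde G_L$ is quasi-split. There, a Borel subgroup $B\subset\widetilde G_L$ is a rational variety, and $B(L)\to Q(L)$ is onto because $H^1(L,B\cap D\widetilde G)=1$. Hence $R\widetilde G(L)\to Q(L)$ is onto.

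The paper descends this to $k$ as follows.
\begin{itemize}
\item Set $\cF(A)=\mathrm{Im}\bigl(\widetilde G(A)\to Q(A)\bigr)$ and use Merkurjev's norm principle $N_{L/k}(R\cF(L))\subset R\cF(k)$.
\item Reduce to $Q=R_{K/k}(\GG_m)$ and write $K\otimes_kK=K\times A$. Then $N_{L/K}(L^\times)\times 1\subset R\cF(K)$ for every finite separable $L/K$ with $X(L)\neq\emptyset$, where $X$ is the variety of Borel subgroups.
\item Gille's theorem $N_X(K)=K^\times$ \cite[Th\'eor\`eme 5.3.1]{Gi2238} gives $K^\times\times 1\subset R\cF(K)$. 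This is exactly where $\scd_l(k)\le 2$ enters.
\item Applying $N_{K/k}=\mathrm{id}\times N_{A/K}$ yields $R\cF(k)=Q(k)$, i.e.\ $R\widetilde G(k)\to Q(k)$ is onto.
\end{itemize}

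In part (2), your reduction to the triviality of $H^1(k,F)\to H^1(k,\widetilde G)$ matches the paper, but the justification does not hold. Serre's Conjecture II is not known for the possibly non-quasi-split group $D\widetilde G$, so routing the argument through $H^1(k,D\widetilde G)$ gives nothing. The needed step is this. $H^1(k,F)$ is killed by some integer $d$, for instance the degree of a Galois splitting field of $F$. Hence $H^1_{fppf}(k,{}_dF)\to H^1(k,F)$ is onto. Now ${}_dF$ is a finite central subgroup of multiplicative type of $\widetilde G$, and \cite[Proposition 5.4.1]{Gi2238} shows that $H^1_{fppf}(k,{}_dF)\to H^1(k,\widetilde G)$ is trivial. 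Your remark about applying the result ``only to the image classes coming from $F$'' points in this direction. Without passing to the finite group ${}_dF$, however, there is no theorem to invoke.
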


\begin{proof}
(1) We denote by $E$ the coradical torus of 
$\widetilde G$,  which is a quasi-trivial 
$k$-torus. For the first part, it is 
enough to prove that the map
$R\widetilde G(k) \to E(k)$ is onto.
We define the commutative $k$-functor
$\cF(A)= \mathrm{Im}\bigl( \widetilde G(A) \to E(A) \bigr)$ for each $k$-algebra $A$.
It is then enough to show that 
$R\cF(k)=E(k)$.  

 The key tool
is the norm principle (which does not use the assumption on the field) stating that 
$$
N_{L/k}(R\cF(L)) \subset R\cF(k)
$$
for each finite separable extension $L/k$
(and more generally for each \'etale algebra), see
\cite[Theorem 3.9]{Me}.

We decompose $E$ into a product of 
quasi-trivial tori $E_i=R_{k_i/k}(\GG_m)$ and consider the preimage $\widetilde G_i$
of $E_i$ in $\widetilde G$; we are reduced to the case $E=R_{K/k}(\GG_m)$ for a separable
extension $K/k$. We write $K \otimes_k K= K \times A$ where $A$ is an \'etale $k$-algebra.

Let $X$ be the variety of Borel subgroups of
$\widetilde G$. Let $L/K$ be a finite separable
extension such that $X(L) \not = \emptyset$.
Then we have a decomposition 
$K \otimes_k L = L \times B$ compatible
with  the decomposition
$K \otimes_k K= K \times A$.
The norm principle yields
 the following commutative diagram

\begin{equation}\label{diag41}
\xymatrix{
R\cF(k) \ar@{^{(}->}[r] & E(k) = K^\times \\
R\cF(K) \ar[u] \ar@{^{(}->}[r] &
 E(K) = (K\otimes_k K)^\times  \ar[u]_{N_{K/k}}
 & = K^\times \times A^\times
\ar[ul]_{id \times N_{A/K}}   
 \\
R\cF(L)\ar[u] \ar@{^{(}->}[r] & E(L) = 
(K\otimes_k L)^\times \ar[u]_{N_{L/K}}
& = L^\times \times  B^\times
 \ar[u]_{N_{L/K} \times N_{B/A}}  
  }
\end{equation}
Since $\widetilde G_{L}$ is quasi-split, 
it admits a Borel $L$-subgroup $Q$.
Since the the coradical torus of $Q$ is an induced torus, we have $H^1(L,Q) \simlgr H^1(L,\corad(Q))=1$ \cite[Exp.\ XXIV, Corollaire 3.14]{SGA3}.
Since 
the map $E(L) \to H^1(L,D\widetilde G)$ factorizes
through  $H^1(L,Q)=1$, we obtain that $R\cF(L)=E(L)$ and 
in particular that $L^\times \times 1 \subseteq
R\cF(L)$.
By taking all such $L$'s above, we obtain
that $N_X(K) \times 1  \subseteq R\cF(K)$.
According to  \cite[Th\'eor\`eme 5.3.1]{Gi2238},
we have $N_X(K)=K^\times$.
It follows that  $K^\times \times 1  \subseteq R\cF(K)$ and the diagram yields 
that $E(k) \subset R\cF(k)$. 
Thus $R\cF(k)=E(k)$.
\bigskip

\smallskip

\noindent (2) Taking into account
Proposition \ref{prop_regular} and
(1), it remains to show that 
the map $H^1(k,F) \to H^1(k, \widetilde G)$
is trivial. Since $H^1(k,F)$ is of bounded exponent, say $d$, the map
$H^1_{fppf}(k, {_dF}) \to H^1(k,F)$ is onto where
${_dF}$ stands for the $d$-torsion of $F$.
It is then enough to establish the triviality
of the map $H^1_{fppf}(k,{_dF}) \to H^1(k,\widetilde G)$. Since this fact  is proven in \cite[Proposition 5.4.1]{Gi2238}, we are done.
\end{proof}

The next step is to determine
the group $D\widetilde G(k)/R$.
This group is expected to vanish
under the assumptions of 
Theorem \ref{thm_scd}.
There are many known cases in terms of 
groups and fields, see \cite[Th\'eor\`eme 8.4]{CT2008} and  \cite[\S 8.2]{Gi2009}.

\appendix

\section{Permanence properties}

The setting is the following.
Let $S_0$ be a base scheme (e.g.\ $S_0=\Spec(\ZZ)$).
Consider an inverse system of $S_0$-schemes 
$(S_i, f_{ii'} )$ over $I$ (where  $I$
is  a directed set)   whose  transition morphisms $f_{ii'}$ are affine.
Let $S$ be the inverse limit of the $S_i$'s.
The following lemma is known if the $S_i$'s are affine   \cite[Corollary 3.1.11]{C}.

\begin{slemma}\label{lem_reductive} 
Let $i_0 \in I$ and 
assume that $S_{i_0}$ is quasi-compact.
Let $G_{i_0}$ be an affine $S_{i_0}$-group scheme
of finite presentation. Then the following are equivalent:

\sm

(i) There exists $i_1 \geq i_0$ such that $S_i$-group scheme $G_{i_0} \times_{S_{i_0}} S_i$ is reductive (resp.\ semisimple) for each $i \geq i_1$;

\sm

(ii) The $S$-group scheme $G_{i_0} \times_{S_{i_0}} S$ is reductive (resp.\ semisimple).
 
\end{slemma}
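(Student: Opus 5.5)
The plan is to reduce to the affine case already known from \cite[Corollary 3.1.11]{C}, by working on a finite affine open cover of $S_{i_0}$.

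The implication (i) $\Rightarrow$ (ii) is immediate. Reductivity (resp.\ semisimplicity) is stable under base change, so from $i \geq i_1$ we pull back along $S \to S_i$. The substance is therefore (ii) $\Rightarrow$ (i).

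For that direction, since $S_{i_0}$ is quasi-compact, choose a finite affine open cover $S_{i_0}=U^1 \cup \dots \cup U^n$. Because the transition morphisms $f_{ii'}$ are affine, the preimages $U^k_i = f_{i i_0}^{-1}(U^k)$ are affine for all $i \geq i_0$. Moreover $U^k = \limproj_i U^k_i$, where $U^k$ also denotes the preimage in $S$, and the projection $S \to S_{i_0}$ is affine. On each $U^k$ the affine group scheme $G_{i_0}\times_{S_{i_0}} U^k$ is reductive (resp.\ semisimple), since these properties are local on the base. The affine case \cite[Corollary 3.1.11]{C}, applied to the inverse system $(U^k_i)_{i\geq i_0}$, gives an index $i_1^k$ such that $G_{i_0}\times_{S_{i_0}} U^k_i$ is reductive (resp.\ semisimple) for all $i\geq i_1^k$.

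Because $I$ is directed and the cover is finite, we can take $i_1 \geq i_1^1,\dots,i_1^n$. For $i \geq i_1$ the opens $U^k_i$ cover $S_i$, and $G_{i_0}\times_{S_{i_0}} S_i$ is reductive (resp.\ semisimple) on each of them. Reductivity is local on the base: smooth affine with connected reductive geometric fibers, and semisimplicity is likewise a fibrewise condition. Hence the restrictions glue, and (i) holds.

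The main obstacle I expect is making sure the cited affine result applies in the form needed. It requires $G_{i_0}$ restricted to $U^k$ to be affine and of finite presentation over the affine scheme $U^k$, and it requires the limit of the $U^k_i$ to be the corresponding open of $S$. Both follow from the affineness of the transition maps, using \cite[Tag 01YX]{Stacks}-type facts on limits. If the reference only treats reductivity, semisimplicity is handled separately. Once the group is reductive, semisimplicity amounts to the radical torus being trivial, that is, to $\rad(G)$ having rank zero on fibres. This is a locally constant condition on the base. Its truth over $S$ descends to some $S_i$ after enlarging $i$, by the same finite-cover argument combined with the limit behaviour of the fibre rank.
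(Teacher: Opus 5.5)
Your proposal is correct and follows the paper's own argument: the paper simply says the statement follows from the affine case by covering $S_{i_0}$ with finitely many affine opens. You have written out the details that make this work, namely that preimages of affine opens under the affine transition maps are affine, that limits are compatible with open subschemes, that the directed set gives a common upper bound, and that reductivity and semisimplicity are local on the base.
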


\begin{proof} The generalization  from the affine case is immediate
by covering $S_{i_0}$ by finitely many 
affine open subsets. 
\end{proof}

\begin{slemma}\label{lem_linear} 
Let $i_0 \in I$ and  let $G_{i_0}$ be an affine $S_{i_0}$-group scheme
of finite presentation.
Then the following are equivalent:

\sm

(i) There exists $i_1 \geq i_0$ such that $G_{i_0}
\times_{S_{i_0}} S_{i_1}$ is $S_{i_1}$-linear;

\sm

(ii) $G$ is linear.

\end{slemma}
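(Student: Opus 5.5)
The implication (i) $\Rightarrow$ (ii) is formal, and I will deduce (ii) $\Rightarrow$ (i) from the standard limit formalism of EGA IV, \S 8, and \cite[Tag 01ZA]{Stacks}. Throughout I assume, as in the application in Theorem \ref{thm_qcqs}, that $S_{i_0}$ is quasi-compact and quasi-separated; the limit theorems I use require this. I write $G_i=G_{i_0}\times_{S_{i_0}} S_i$ and $G=G_{i_0}\times_{S_{i_0}} S$.

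For (i) $\Rightarrow$ (ii), take an $S_{i_1}$-group monomorphism $\rho_{i_1}: G_{i_1} \to \GL(\cE_{i_1})$ with $\cE_{i_1}$ locally free of finite rank, and base change it along $S \to S_{i_1}$. Pullback preserves local freeness of finite rank, and $\GL(\cE_{i_1})\times_{S_{i_1}} S=\GL(\cE_{i_1}\otimes \cO_S)$. Monomorphisms are stable under base change, and so are group homomorphisms. Hence $G$ is linear.

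For (ii) $\Rightarrow$ (i), let $\rho: G \to \GL(\cE)$ be a monomorphism of $S$-groups with $\cE$ locally free of finite rank. I proceed in four steps.
\begin{enumerate}
\item[(a)] $\cE$ is a quasi-coherent $\cO_S$-module of finite presentation, so it descends to a finitely presented $\cO_{S_i}$-module $\cE_i$ for some $i$ (EGA IV 8.5.2). After enlarging $i$, $\cE_i$ is locally free, since local freeness descends for finitely presented modules (EGA IV 8.5.5, or Stacks 05LY). Its rank is locally constant, and finitely many affine opens cover $S_{i_0}$, so this is a finite check.
\item[(b)] $\GL(\cE_i)$ and $G_i$ are affine of finite presentation over $S_i$, with $\GL(\cE_i)\times_{S_i}S=\GL(\cE)$. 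By EGA IV 8.8.2, the morphism $\rho$ comes from a morphism of $S_i$-schemes $\rho_i: G_i \to \GL(\cE_i)$ for $i$ large.
\item[(c)] Compatibility with multiplication is an equality of two morphisms $G_i\times_{S_i} G_i \to \GL(\cE_i)$ between finitely presented $S_i$-schemes. This equality holds after base change to $S$, hence already at some finite level by the uniqueness part of EGA IV 8.8.2. So after enlarging $i$, $\rho_i$ is a group homomorphism.
\item[(d)] It remains to make $\rho_i$ a monomorphism; I expect this to be the main point needing care. A morphism is a monomorphism if and only if its diagonal $\Delta_i: G_i \to G_i \times_{\GL(\cE_i)} G_i$ is an isomorphism. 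Both source and target are affine of finite presentation over $S_i$, and the construction of $\Delta_i$ commutes with base change. Since $\Delta_i\times_{S_i}S$ is an isomorphism, EGA IV 8.8.2.5 (or 8.10.5) shows that $\Delta_i$ is an isomorphism for $i$ large. If one prefers to avoid quoting this for the diagonal, one can instead use the group structure: a homomorphism is a monomorphism if and only if its kernel is the trivial subgroup. The kernel $\ker(\rho_i)$ is finitely presented, since $\GL(\cE_i)$ is separated and finitely presented, and the isomorphism $S_i \simlgr \ker(\rho_i)$ then descends in the same way.
\end{enumerate}

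Taking $i_1$ to be an index at which (a)--(d) all hold gives (i). The only genuine input beyond formal manipulation is the finite presentation of all objects involved, which is guaranteed by the hypothesis on $G_{i_0}$ and by quasi-compactness.
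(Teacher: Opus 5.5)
Your proposal is correct and follows the paper's proof. Both descend $\cE$ to a locally free module at a finite level, descend $\rho$ to a group homomorphism, and then spread out the monomorphism property; you only unpack the cited results, using EGA IV 8.8.2 where the paper cites SGA3 VI$_B$ 10.2, and the diagonal or trivial-kernel criterion where the paper cites Stacks 07RQ. Your explicit quasi-compact, quasi-separated hypothesis on $S_{i_0}$ is appropriate: the limit theorems you and the paper invoke require it, the statement as written omits it, and it does hold in the application in Theorem \ref{thm_qcqs}.
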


\begin{proof}  
We put $G_i= G_{i_0} \times_{S_{i_0}} S_i$
for each $i$
so that $G= \limproj_{i \geq i_0} G_i$
in view of \cite[Proposition 8.2.5]{EGA4}. 

\sm

\noindent  $(i) \Longrightarrow (ii)$. Obvious.

\smallskip

\noindent $(ii) \Longrightarrow (i)$.
We are given a monomorphism $f: G \to \GL(\cE)$ where 
$\cE$ is a locally free $\cO_{S}$-module of 
finite rank. According to 
\cite[Lemma 0B8W]{Stacks}, there exists
an index $i_1$ such that $\cE= \cE_{i_1} \otimes_{\cO_{S_{i_1}}} \cO_{S}$ and 
$\cE_{i_1}$ is a locally free $\cO_{S_{i_1}}$-module of 
finite rank. For each $i \geq i_1$, we
set $\cE_i= \cE_{i_1} \otimes_{\cO_{S_{i_1}}} \cO_{S_i}$.
Applying \cite[Exp.\ VI$_B$, Lemme 10.2]{SGA3}
shows that there exists $i_2 \geq i_1$
and a homomorphism $f_{i_2}: G_{i_2} \to 
\GL( \cE_{i_2})$ such that
 $f=f_{i_2} \times_{S_{i_2}} S$.    

 According to \cite[Tag 07RQ]{Stacks}, there exists $i_3 \geq i_2$
 such that 
 $f_{i_3}= f_{i_2}  \times_{S_{i_2}} S_{i_3}: 
 G_{i_3} \to \GL( \cE_{i_3})$ 
 is a monomorphism, hence 
 the $S_{i_3}$-group scheme $G_{i_3}$ is linear.

\end{proof}

\bigskip

\medskip

\end{document}